\newcommand{\seqnum}[1]{\href{http://www.research.att.com/cgi-bin/access.cgi/as/~njas/sequences/eisA.cgi?Anum=#1}{\underline{#1}}}
\definecolor{webgreen}{rgb}{0,.5,0}
\definecolor{webbrown}{rgb}{.6,0,0}
\newtheorem{thm}{Theorem}
\newtheorem{lemma}[thm]{Lemma}
\newtheorem{case}{Case}
\newtheorem{conj}[thm]{Conjecture}
\begin{document}

\begin{center}
\vskip 1cm{\LARGE\bf 
On a Conjecture of Andrica \& Tomescu
}
\vskip 1cm
\large
Blair D. Sullivan\footnote{The submitted manuscript has been authored by a contractor of the U.S. Government under Contract No. DE-AC05-00OR22725. Accordingly, the U.S. Government retains a non-exclusive, royalty-free license to publish or reproduce the published form of this contribution, or allow others to do so, for U.S. Government purposes.}\\
Oak Ridge National Laboratory\\ 
1 Bethel Valley Road, Oak Ridge, TN 37831\\
{\tt sullivanb@ornl.gov}
\end{center}
\vskip .2 in

\begin{abstract}
    Let $S(n)$ be the integer sequence which is the coefficient of $x^{n(n+1)/4}$
    in the expansion of $(1+x)(1+x^2)\dotsm (1+x^n)$ for positive integers $n$ 
    congruent to 0 or 3 mod 4. We prove a conjecture of 
    Andrica and Tomescu \cite{andrica_tomescu} that 
    $S(n)$ is asymptotic to $\sqrt{6/\pi} \cdot 2^n n^{-3/2}$ as $n$ 
    approaches infinity.
\end{abstract}

\bigskip

\section{Introduction}
Let $S(n)$ denote the coefficient of the middle term of the 
expansion of the polynomial $(1+x)(1+x^2)\dotsm(1+x^n)$ 
when $n \equiv$ 0 or 3 mod 4 (otherwise $n(n+1)/4$ is not an integer, 
and the expansion has no middle term). Andrica and Tomescu conjectured that as $n$ 
approaches infinity, $S(n)$ behaves asymptotically like 
$\sqrt{6/\pi} \cdot 2^n n^{-3/2}$. 
More formally, writing $f(n) \sim g(n)$ to denote 
$$\lim_{n\rightarrow\infty} \frac{f(n)}{g(n)} = 1,$$ 
we have
\begin{conj}\label{atconj}[Andrica, Tomescu~\cite{andrica_tomescu}] 
$S(n) \sim \sqrt{6/\pi} \cdot 2^n n^{-3/2}$ for $n \equiv$ $0$ or $3$ mod $4$. 
\end{conj}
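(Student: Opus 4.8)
The plan is to attack this with the circle method (equivalently, a local central limit theorem for the random variable $\sum_{k=1}^n \varepsilon_k k$ with $\varepsilon_k$ i.i.d.\ uniform on $\{0,1\}$). The starting point is an exact integral representation. Writing $N = n(n+1)/4$ and extracting the coefficient by orthogonality,
\[
S(n) = \frac{1}{2\pi}\int_{-\pi}^{\pi}\prod_{k=1}^n\bigl(1+e^{ik\theta}\bigr)\,e^{-iN\theta}\,d\theta .
\]
Using $1 + e^{ik\theta} = 2e^{ik\theta/2}\cos(k\theta/2)$ together with $\sum_{k=1}^n k/2 = N$, the phase cancels exactly and one obtains the clean real representation
\[
S(n) = \frac{2^n}{2\pi}\int_{-\pi}^{\pi}\prod_{k=1}^{n}\cos\!\Bigl(\frac{k\theta}{2}\Bigr)\,d\theta .
\]
I would then split this integral into a major arc $|\theta|\le\delta_n$ and a minor arc $\delta_n\le|\theta|\le\pi$, choosing $\delta_n = n^{-11/8}$ (any exponent strictly between $-3/2$ and $-5/4$ works).

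On the major arc I would use the Laplace/Gaussian method. Since $k\theta/2 \le \tfrac12 n\delta_n \to 0$ uniformly there, the Taylor expansion $\log\cos u = -\tfrac{u^2}{2} + O(u^4)$ gives
\[
\prod_{k=1}^n\cos\!\Bigl(\frac{k\theta}{2}\Bigr) = \exp\!\Bigl(-\tfrac{\sigma^2}{2}\,\theta^2 + O(\theta^4 n^5)\Bigr), \qquad \sigma^2 = \tfrac14\sum_{k=1}^n k^2 = \tfrac{n^3}{12}(1+o(1)),
\]
where the error is uniformly $O(n^{-1/2})$ because $\delta_n^4 n^5 = n^{-1/2}$. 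Integrating the Gaussian and extending to $\pm\infty$ (the tail past $\delta_n$ is super-polynomially small since $\sigma^2\delta_n^2 = n^{1/4}/12 \to \infty$) yields $\int e^{-\sigma^2\theta^2/2}\,d\theta = \sqrt{2\pi/\sigma^2}\,(1+o(1)) = \sqrt{24\pi}\,n^{-3/2}(1+o(1))$. Multiplying by $2^n/(2\pi)$ gives the main term, and the arithmetic $\sqrt{24\pi}/(2\pi) = \sqrt{6/\pi}$ produces exactly the conjectured constant.

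The real work is the minor arc, where I must show $\int_{\delta_n}^{\pi}\bigl|\prod_k\cos(k\theta/2)\bigr|\,d\theta = o(n^{-3/2})$. The basic tool is the pointwise bound $|\cos x|\le \exp\!\bigl(-\tfrac12\,\mathrm{dist}(x,\pi\mathbb Z)^2\bigr)$ (from the monotonicity of $x^2/2 + \log\cos x$), which after rescaling $\beta = \theta/(2\pi)$ gives $\prod_k|\cos(k\theta/2)| \le \exp\!\bigl(-\tfrac{\pi^2}{2}\sum_{k=1}^n\|k\beta\|^2\bigr)$, where $\|\cdot\|$ is distance to the nearest integer. For $\theta\in[\delta_n,\pi/n]$ every argument satisfies $k\theta/2\le\pi/2$, so $\sum\|k\beta\|^2 = \tfrac14\theta^2\sum k^2 \gtrsim \delta_n^2 n^3 = n^{1/4}$ and the integrand is super-polynomially small. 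For $\theta\in[\pi/n,\pi]$, i.e.\ $\beta\in[\tfrac1{2n},\tfrac12]$, I would bound $\sum\|k\beta\|^2$ from below using $\|t\|^2\ge \pi^{-2}\sin^2\pi t$ together with the geometric-series estimate $\bigl|\sum_{k=1}^n\cos 2\pi k\beta\bigr|\le 1/\sin\pi\beta \le 1/\sin(\pi/2n)\sim 2n/\pi$, giving
\[
\sum_{k=1}^n\|k\beta\|^2 \ge \frac{1}{2\pi^2}\Bigl(n - \frac{1}{\sin(\pi/2n)}\Bigr) \ge \frac{1-2/\pi}{2\pi^2}\,n\,(1-o(1)).
\]
Hence the integrand is bounded by $e^{-cn}$ on this range, and the entire minor arc contributes $o(n^{-3/2})$. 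The delicate point — and the main obstacle — is precisely that this lower bound on $\sum\|k\beta\|^2$ degrades as $\beta\to0$, so the minor-arc cutoff $\delta_n$ and the crossover $\theta=\pi/n$ must be matched carefully against the Gaussian scale $n^{-3/2}$; the fortunate inequality $2/\pi<1$ is what makes the elementary geometric-series bound strong enough to cover the whole range $\beta\ge 1/2n$ without invoking finer Diophantine (continued-fraction) information. Combining the two arcs yields $S(n) = \sqrt{6/\pi}\,2^n n^{-3/2}(1+o(1))$, which is the claim.
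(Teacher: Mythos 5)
Your proof is correct, and its skeleton---Cauchy's formula reducing $S(n)$ to $\frac{2^n}{2\pi}\int\prod_k\cos(k\theta/2)\,d\theta$ (the paper's integral after $t=\theta/2$), a Gaussian/Laplace approximation on a major arc of width about $n^{-3/2+\epsilon}$, and an $o(n^{-3/2})$ bound on the complement---is the same as the paper's. The major-arc computation (Taylor expansion of $\log\cos$, variance $\sigma^2\sim n^3/12$, Gaussian integral yielding $\sqrt{6/\pi}\cdot 2^n n^{-3/2}$) matches the paper essentially step for step. Where you genuinely diverge is the tail estimate, the paper's Lemma~\ref{uniform_zero_lemma}: the paper covers $[n^{-(3/2-\varepsilon)},\pi/2]$ with three different devices (a termwise Taylor bound on $\cos(kt)$ up to $|t|\le 1/n$; a reduction to $f_{\lfloor n/4\rfloor}$ to stretch that bound to $|t|\le \pi/n$; and an arithmetic-geometric-mean argument combined with $\sum\cos^2(kt)=\frac{n}{2}+\frac{\cos((n+1)t)\sin(nt)}{2\sin t}$ and Jordan's inequality beyond $\pi/n$), whereas you use the single uniform bound $|\cos x|\le\exp\bigl(-\tfrac12\operatorname{dist}(x,\pi\mathbb{Z})^2\bigr)$, which converts the whole problem into a lower bound on $\sum_k\|k\beta\|^2$. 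This disposes of $[\delta_n,\pi/n]$ in one stroke (no analogue of the paper's Case 2 is needed), and on $[\pi/n,\pi]$ it reduces to the same geometric-series estimate $\bigl|\sum_k\cos(2\pi k\beta)\bigr|\le 1/\sin(\pi\beta)$ that underlies the paper's Case 3, with the same decisive margin $2/\pi<1$. Your treatment is more uniform and avoids the $\lfloor n/4\rfloor$ patching step; the paper's AM--GM route avoids needing the pointwise sub-Gaussian bound on the cosine. One cosmetic slip: with $\beta=\theta/(2\pi)$ you should get $\sum_k\|k\beta\|^2=\frac{\theta^2}{4\pi^2}\sum_k k^2$ rather than $\frac{\theta^2}{4}\sum_k k^2$; this only changes the constant inside a bound that is already super-polynomially small, so nothing breaks.
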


From \cite{andrica_tomescu}, one can write $S(n)$ in integral form via Cauchy's formula as
$$S(n)= \frac{2^{n-1}}{\pi}\int_{0}^{2\pi}\cos(t)\cos(2t)\dotsm \cos(nt)\,dt.$$
We will use the Laplace method to estimate
this integral \cite{dB}. Rewriting, we have $S(n) = \frac{2^{n-1}}{\pi}
\int_{0}^{2\pi}f_{n}(t)\,dt$ where $f_{n}(t) = \prod_{k=1}^{n} \cos(kt)$. 
In Section~\ref{lemmasect}, we analyze the behavior of $f_n(t)$ and note a 
technical lemma needed for the main proof of Conjecture~\ref{atconj}, which is presented in 
Section~\ref{mainsect}.

\section{Behavior of $f_n(t)$}\label{lemmasect}

\begin{lemma}\label{uniform_zero_lemma}
  Let $0 < \varepsilon < 1/4$, and $f_{n}(t) = \prod_{k=1}^{n} \cos(kt)$. 
  Then $\int_{n^{-(3/2-\varepsilon)} < |t| < \pi/2} |f_n(t)| \,dt = o(n^{-3/2})$ as $n \to \infty$.  
  \end{lemma}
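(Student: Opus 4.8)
The plan is to control $|f_n(t)| = \prod_{k=1}^n |\cos(kt)|$ by a Gaussian and then integrate. The basic estimate is the elementary inequality $|\cos\theta| = (1-\sin^2\theta)^{1/2} \le \exp(-\tfrac12\sin^2\theta)$, which gives
$$|f_n(t)| \le \exp\!\Big(-\tfrac12\,V_n(t)\Big), \qquad V_n(t) := \sum_{k=1}^n \sin^2(kt).$$
Since $|f_n|$ is even, it suffices to treat $t > 0$ and double the result, so the whole problem reduces to a lower bound for $V_n(t)$ on the interval $n^{-(3/2-\varepsilon)} < t < \pi/2$, which I would split at the scale $t = \pi/n$.

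On the outer range $\pi/n \le t \le \pi/2$ I would use the closed form $V_n(t) = \tfrac n2 - \tfrac12\,\frac{\sin(nt)\cos((n+1)t)}{\sin t}$, which follows from the geometric-type sum $\sum_{k=1}^n \cos(2kt) = \frac{\sin(nt)\cos((n+1)t)}{\sin t}$. The second term is at most $\frac{1}{2\sin t} \le \frac{\pi}{4t}$ by the concavity bound $\sin t \ge \tfrac2\pi t$ on $[0,\pi/2]$, so $V_n(t) \ge \tfrac n2 - \tfrac{\pi}{4t} \ge \tfrac n4$ for every $t \ge \pi/n$. Hence $|f_n(t)| \le e^{-n/8}$ there, and this part of the integral is at most $\tfrac{\pi}{2} e^{-n/8} = o(n^{-3/2})$.

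On the inner range $n^{-(3/2-\varepsilon)} < t < \pi/n$ the closed form is useless because $\sin t$ is too small, so instead I would discard the terms with $kt > \pi/2$ and keep only $k \le m := \min\{n, \lfloor \pi/(2t)\rfloor\}$; the constraint $t < \pi/n$ guarantees $m \ge n/2 - 1$. For $1 \le k \le m$ we have $kt \le \pi/2$, hence $\sin(kt) \ge \tfrac2\pi kt$, and therefore
$$V_n(t) \ge \frac{4t^2}{\pi^2}\sum_{k=1}^m k^2 \ge \frac{4t^2 m^3}{3\pi^2} \ge c\,n^3 t^2$$
for an absolute constant $c > 0$ and all large $n$. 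Consequently $|f_n(t)| \le \exp(-\tfrac c2 n^3 t^2)$, and after the substitution $s = n^{3/2}t$ the integral over this range is bounded by $n^{-3/2}\int_{n^{\varepsilon}}^{\infty} e^{-(c/2)s^2}\,ds$. The Gaussian tail estimate $\int_a^\infty e^{-\alpha s^2}\,ds \le \frac{1}{2\alpha a}e^{-\alpha a^2}$ with $a = n^\varepsilon$ shows this is $n^{-3/2}$ times a quantity decaying faster than any power of $n$, hence $o(n^{-3/2})$.

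Adding the two ranges and doubling completes the estimate. I expect the only delicate point to be the uniform lower bound on $V_n(t)$ near the transition $t \sim 1/n$, where neither the quadratic small-$t$ estimate nor the closed-form bounded-$t$ estimate is individually sharp; the truncation level $m$ and the constants around $t = \pi/n$ must be tracked with some care. Fortunately both regimes yield $V_n(t) \gtrsim n$ in an overlapping neighborhood of that scale, so the split is clean and no intermediate regime is left uncovered.
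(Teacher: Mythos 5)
Your proof is correct, but it takes a genuinely different and somewhat cleaner route than the paper. The paper splits the range into three cases: on $[n^{-(3/2-\varepsilon)}, 1/n]$ it bounds $f_n$ only at the left endpoint via a fourth-order Taylor bound on $\cos$ together with $\log(1-x)\le -x$, and invokes monotonicity of $f_n$ on $[0,1/n]$ to extend the bound; on $[1/n,\pi/n]$ it reduces to the first case by discarding factors, using $|f_n(t)|\le |f_{\lfloor n/4\rfloor}(t)|$; and on $[\pi/n,\pi/2]$ it applies the arithmetic--geometric mean inequality to $\prod\cos^2(kt)$, the identity $\sum_{k=1}^n\cos(2kt)=\sin(nt)\cos((n+1)t)/\sin t$, and Jordan's inequality $\sin t\ge 2t/\pi$. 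Your single master inequality $|\cos\theta|\le e^{-\sin^2(\theta)/2}$ reduces everything to lower-bounding $V_n(t)=\sum_{k=1}^n\sin^2(kt)$, and your outer-range computation is essentially the paper's Case 3 in different clothing (same trig identity, same Jordan bound, with the exponential estimate playing the role of AM--GM). The real divergence is on the inner range: your truncation to $k\le\lfloor\pi/(2t)\rfloor$ yields the pointwise Gaussian bound $|f_n(t)|\le e^{-cn^3t^2}$ valid on all of $(n^{-(3/2-\varepsilon)},\pi/n)$, which integrates directly to a Gaussian tail $n^{-3/2}\int_{n^\varepsilon}^\infty e^{-(c/2)s^2}\,ds$; this eliminates both the paper's endpoint-plus-monotonicity argument and its entire intermediate case $1/n\le|t|\le\pi/n$. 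What you give up is nothing essential here, and what you gain is a two-piece decomposition with an overlap-free transition at $t=\pi/n$ and a bound strong enough that the integration, rather than a sup-norm estimate times interval length, carries the argument.
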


  \begin{proof}
    
    We break the integral into three pieces based on the value of $|t|$.

    \begin{case}$n^{-(\frac{3}{2}-\varepsilon)} \leq |t|  \leq \frac{1}{n}$:\end{case}
    Since $\cos(x) = \cos(-x)$, and $\cos$ is a monotone decreasing function on $[0,\pi]$,
    $f_n(t) = f_n(-t)$ is also monotone decreasing for $t \in [0,1/n]$, and it suffices to 
    give an appropriate upper bound on $f_n(n^{-(3/2-\varepsilon)})$. 

    Since we need  $\int_{n^{-(3/2-\varepsilon)} < |t| < \pi/2} f_n(t)\,dt = o(n^{-3/2})$, 
    given that $0 < \varepsilon < 1/4,$
    it suffices to show that for a constant $c >0$,  
    $$f_n(n^{-(3/2-\varepsilon)}) \leq \exp(-cn^{2\varepsilon}(1+o(1))).$$ 
    
    Using the Taylor series expansion, we know $\cos(kt) \leq 1 - \frac{(kt)^{2}}{2!} + \frac{(kt)^{4}}{4!}$. 
    Substitution then yields 
    $$f_n(t) = \prod_{k=1}^{n} \cos(kt) \leq 
    \prod_{k=1}^{n} \left(1 - \frac{(kt)^{2}}{2!} + \frac{(kt)^{4}}{4!}\right),$$ 
    since $k \leq n$ and $|t| \leq 1/n$ implies $kt \leq 1$. 
    When $t = n^{-(3/2-\varepsilon)}$, we have
    $$f_n(t) \le \prod_{k=1}^{n} \left(1-\frac{k^2n^{-(3-2\varepsilon)}}{2} + \frac{k^4n^{-(6-4\varepsilon)}}{24}\right).$$
    To evaluate, we note the terms of this product are all in $[0,1]$, and apply $\log(1-x) \leq -x$:
    $$\log \prod_{k=1}^{n} \left(1- \left(\frac{k^2n^{-(3-2\varepsilon)}}{2}- \frac{k^4n^{-(6-4\varepsilon)}}{24}\right)\right) \leq
    \sum_{k=1}^{n} \left(-\frac{k^2n^{-(3-2\varepsilon)}}{2} + \frac{k^4n^{-(6-4\varepsilon)}}{24}\right).$$
     
    Writing $\sum_{k=1}^{n} k^2 = (1/3+ o(1))n^3$ and $\sum_{k=1}^{n} k^4 = (1/5+o(1))n^5$, we have
    $$\log f_n = -\left(\frac{1}{6}+o(1)\right)n^3n^{-(3-2\varepsilon)} + \left(\frac{1}{120}+o(1)\right)n^5n^{-(6-4\varepsilon)}.$$ Letting $c = 1/6$, 
    $f_n \leq \exp(-c(1+o(1))n^{2\varepsilon} + c(1+o(1))n^{-1+4\varepsilon}),$ 
    and recalling $\varepsilon < 1/4$, 
    $f_n \leq \exp(-c(1+o(1))n^{2\varepsilon})$ as desired.

    \begin{case}$\frac{1}{n} \leq |t| \leq \frac{\pi}{n}$:\end{case}
    Here we use will the monotonicity of $f_n(t)$ in $n$. It follows 
    directly from $f_{n}(t) = \prod_{k=1}^{n} \cos(kt)$ and 
    $0 \leq \cos(x) \leq 1$ that $|f_n(t)| \leq |f_m(t)|$ for $n \geq m$. 
    Let $h_n = \lfloor n/4 \rfloor$ be the greatest integer in $n/4$. Then 
    $|f_n(t)| \leq |f_{h_n}(t)|$. From Case 1, 
    $f_{h_n}(t) \leq \exp(-ch_n^{2\varepsilon}(1+o(1)))$ for 
    $h_n^{-(\frac{3}{2}-\varepsilon)} \leq |t| \leq 1/h_n$. 
    Since $1/n > h_n^{-5/4} \geq h_n^{-(\frac{3}{2}-\varepsilon)}$ for $n > 1050$ and 
    $h_n \leq n/4 \leq n/\pi$ implies $\pi/n \leq 1/h_n$, 
    we get $|f_n(t)| \leq \exp(-ch_n^{2\varepsilon}(1+o(1)))$ for $t \leq \pi/n$ as $n \to \infty$.

    \begin{case} $\frac{\pi}{n} \leq |t| \leq \frac{\pi}{2}$:\end{case}
    Note that it suffices to show that $|f_n(t)| \leq c^{n}$ for a constant $c < 1$, since then
    $$\int_{\pi/n \leq |t| < \pi/2} |f_n(t)| \,dt \leq \pi\cdot c^n = o(n^{-3/2}).$$ To accomplish this, 
    we first transform $f_n(t)$ from a product to a sum using the arithmetic-geometric mean inequality:
    \begin{equation}\label{agmean}
      (f_{n}^2(t))^{1/n} = \left(\prod_{k=1}^{n} \cos^2(kt)\right)^{1/n} \leq \frac{1}{n}\sum_{k=1}^{n} \cos^2(kt). 
    \end{equation}
    The sum on the right-hand side can be simplified as
    \begin{equation}\label{trigsum}
      \sum_{k=1}^{n} \cos^2(kt) = 
      \frac{n}{2} + \frac{1}{2}\sum_{k=1}^n \cos(2kt) = 
      \frac{n}{2} + \frac{\cos((n+1)t)}{2}\frac{\sin(nt)}{\sin(t)}.
    \end{equation}
    Combining equations~\ref{agmean} and~\ref{trigsum}, we can write
    \begin{equation}\label{sinbound}
      |f_n(t)| \leq \left(\frac{1}{2} + \frac{1}{2n}\frac{1}{\sin(t)}\right)^{n/2}.
    \end{equation}
    We will now apply the Jordan-style concavity inequality $|\sin(t)| \geq  \frac{2|t|}{\pi}$ for $0 \leq |t| \leq \pi/2$. For $\pi/n \leq |t| \leq \pi/2$, substitution in equation~\ref{sinbound} gives: 
    \begin{equation*}
      |f_n(t)| \leq \left(\frac{1}{2} + \frac{1}{2n}\frac{\pi}{2|t|}\right)^{n/2} = \left(\frac{1}{2} + \frac{\pi}{4n|t|}\right)^{n/2}.
    \end{equation*}
    Observing that the right-hand side is monotonically decreasing in $|t|$, 
     we have $|f_n(t)| \leq f_n(\pi/n)$. 
    Evaluating, we see 
    \begin{equation*}
      |f_n(t)| \leq \left(\frac{1}{2} - \frac{1}{2n}\right)^{n/2}  
      \end{equation*}
    proving $|f_n(t)| \leq (\sqrt{7/16})^n$ (since we may assume $2n \geq 16$ as $n \to \infty$).

  \end{proof}

  We will also need the following straightforward lemma from analysis.

  \begin{lemma}
    \label{integral_transform}
    Let $c \in \mathbb{R}$ and $a(c), b(c)$ be real-valued functions such that 
    $$\lim_{c \rightarrow \infty} -a(c)\sqrt{c} = 
    \lim_{c \rightarrow \infty} b(c)\sqrt{c} = \infty.$$ 
    Then $$\int_{a(c)}^{b(c)} e^{-ct^2}\,dt \sim \int_{-\infty}^{\infty} e^{-ct^{2}}\,dt$$ as $c \rightarrow \infty$.
  \end{lemma}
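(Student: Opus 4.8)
The plan is to reduce everything to the standard Gaussian integral by rescaling. Since $c \to \infty$, I may assume $c > 0$ throughout, so that $\int_{-\infty}^{\infty} e^{-ct^2}\,dt$ converges (to $\sqrt{\pi/c}$). The substitution $u = t\sqrt{c}$, valid for $c>0$, transforms both integrals simultaneously: writing $A(c) = a(c)\sqrt{c}$ and $B(c) = b(c)\sqrt{c}$, I obtain
$$\int_{a(c)}^{b(c)} e^{-ct^2}\,dt = \frac{1}{\sqrt{c}}\int_{A(c)}^{B(c)} e^{-u^2}\,du, \qquad \int_{-\infty}^{\infty} e^{-ct^2}\,dt = \frac{1}{\sqrt{c}}\int_{-\infty}^{\infty} e^{-u^2}\,du.$$
The factor $1/\sqrt{c}$ cancels in the ratio, so it suffices to prove
$$\int_{A(c)}^{B(c)} e^{-u^2}\,du \longrightarrow \int_{-\infty}^{\infty} e^{-u^2}\,du = \sqrt{\pi} \qquad \text{as } c \to \infty.$$

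Next I would invoke the hypotheses precisely here: $\lim_{c\to\infty} B(c) = \lim_{c\to\infty} b(c)\sqrt{c} = \infty$ and $\lim_{c\to\infty}A(c) = \lim_{c\to\infty} a(c)\sqrt{c} = -\infty$. Writing the defect as two tails,
$$\int_{-\infty}^{\infty} e^{-u^2}\,du - \int_{A(c)}^{B(c)} e^{-u^2}\,du = \int_{-\infty}^{A(c)} e^{-u^2}\,du + \int_{B(c)}^{\infty} e^{-u^2}\,du,$$
it remains only to show each tail vanishes. Because $\int_{-\infty}^{\infty} e^{-u^2}\,du$ converges, its tails vanish: given $\delta > 0$ there is an $M$ with $\int_{M}^{\infty} e^{-u^2}\,du < \delta$ and $\int_{-\infty}^{-M} e^{-u^2}\,du < \delta$; since $B(c) \to \infty$ and $A(c)\to -\infty$, eventually $B(c) > M$ and $A(c) < -M$, forcing both tails below $\delta$. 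Hence the middle integral tends to $\sqrt{\pi}$ and the ratio to $1$.

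There is no serious obstacle here; the only points needing care are (i) recording that $c > 0$ eventually, so the rescaling and the target integral make sense, and (ii) treating the two tails separately, since $A(c)$ and $B(c)$ may approach their limits at different rates. If one prefers to avoid the $\delta$–$M$ formulation, the tails admit an explicit bound: for $B>0$,
$$\int_{B}^{\infty} e^{-u^2}\,du \le \int_{B}^{\infty} \frac{u}{B}\, e^{-u^2}\,du = \frac{1}{2B}\,e^{-B^2},$$
and symmetrically for the lower tail, which makes the decay quantitative and completes the argument.
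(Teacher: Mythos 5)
Your proof is correct. The paper itself gives no proof of this lemma---it is stated as a ``straightforward lemma from analysis'' and left unproved---and your argument (rescale by $u = t\sqrt{c}$ so the ratio reduces to $\frac{1}{\sqrt{\pi}}\int_{A(c)}^{B(c)} e^{-u^2}\,du$ with $A(c)\to-\infty$, $B(c)\to+\infty$, then kill the two tails) is exactly the standard one the authors presumably have in mind; the explicit tail bound $\int_B^\infty e^{-u^2}\,du \le \tfrac{1}{2B}e^{-B^2}$ is a nice quantitative touch but not needed.
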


\section{Main Result}\label{mainsect}

  We now prove Conjecture 1 holds. 

  \begin{thm}
    When $n \equiv$ 0 or 3 (mod 4), $S(n) \sim \sqrt{6/\pi} \cdot
    2^n n^{-3/2}$. 
  \end{thm}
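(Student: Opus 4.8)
The plan is to apply the Laplace method to $\int_0^{2\pi} f_n(t)\,dt$, localizing the integral's mass to small neighborhoods of the points where $|f_n|$ is maximal and then matching the local behavior to a Gaussian. Fix $0 < \varepsilon < 1/4$ and set $\delta = n^{-(3/2-\varepsilon)}$, the cutoff from Lemma~\ref{uniform_zero_lemma}. First I would reduce the full integral to a neighborhood of $t=0$ by symmetry. Since $\cos(k(2\pi-t)) = \cos(kt)$ and $\cos(k(\pi-t)) = (-1)^k\cos(kt)$, we have $f_n(2\pi-t)=f_n(t)$ and $f_n(\pi-t) = (-1)^{n(n+1)/2}f_n(t)$. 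A short parity check shows $n(n+1)/2$ is even exactly when $n\equiv 0$ or $3\pmod 4$, so in our cases $f_n$ is symmetric about both $\pi$ and $\pi/2$, giving
$$\int_0^{2\pi} f_n(t)\,dt = 4\int_0^{\pi/2} f_n(t)\,dt.$$
(This is also the structural reason the middle coefficient fails to be an integer when $n\equiv 1,2\pmod 4$: there the sign is $-1$ and the peaks at $0$ and $\pi$ cancel.)

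Splitting $\int_0^{\pi/2} = \int_0^{\delta} + \int_{\delta}^{\pi/2}$ and using Lemma~\ref{uniform_zero_lemma} to discard the second piece as $o(n^{-3/2})$, together with the fact that $f_n$ is even, I am left with
$$\int_0^{2\pi} f_n(t)\,dt = 2\int_{-\delta}^{\delta} f_n(t)\,dt + o(n^{-3/2}).$$
This isolates the single peak at the origin, whose contribution I must now compute precisely.

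Next I would carry out the local estimate on $[-\delta,\delta]$. Writing $\log f_n(t) = \sum_{k=1}^n \log\cos(kt)$ and using $\log\cos(x) = -x^2/2 + O(x^4)$ for $|x|\le 1$, the quadratic term contributes $-\tfrac{t^2}{2}\sum_{k=1}^n k^2 = -\tfrac{n^3 t^2}{6} + O(n^2 t^2)$, while the remainder is $O\!\left(t^4\sum_{k=1}^n k^4\right) = O(n^5 t^4)$. For $|t|\le\delta$ the condition $\varepsilon < 1/4$ forces both $n^2 t^2 \le n^{-1+2\varepsilon}$ and $n^5 t^4 \le n^{-1+4\varepsilon}$ to tend to $0$ uniformly, so $\log f_n(t) = -\tfrac{n^3 t^2}{6} + o(1)$ uniformly on $[-\delta,\delta]$, i.e. $f_n(t) = e^{-n^3 t^2/6}(1+o(1))$. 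Hence $\int_{-\delta}^{\delta} f_n \sim \int_{-\delta}^{\delta} e^{-n^3 t^2/6}\,dt$, and applying Lemma~\ref{integral_transform} with $c = n^3/6$ (for which $\delta\sqrt{c} = n^{\varepsilon}/\sqrt{6}\to\infty$) replaces the truncated Gaussian by the full one, with $\int_{-\infty}^{\infty} e^{-n^3 t^2/6}\,dt = \sqrt{6\pi}\,n^{-3/2}$.

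Assembling the pieces yields $\int_0^{2\pi} f_n \sim 2\sqrt{6\pi}\,n^{-3/2}$, so
$$S(n) = \frac{2^{n-1}}{\pi}\int_0^{2\pi} f_n(t)\,dt \sim \frac{2^{n-1}}{\pi}\cdot 2\sqrt{6\pi}\,n^{-3/2} = \sqrt{6/\pi}\cdot 2^n n^{-3/2},$$
using $\sqrt{6\pi}/\pi = \sqrt{6/\pi}$. I expect the main obstacle to be the local step: the remainder control must hold \emph{uniformly} across the entire window $|t|\le\delta$ rather than merely pointwise, and this is exactly where the constraint $\varepsilon < 1/4$ is essential and where the estimates of Lemma~\ref{uniform_zero_lemma} are mirrored. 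The symmetry bookkeeping is the other delicate point, since one must track both the sign $(-1)^{n(n+1)/2}$ and the resulting factor of $2$ correctly.
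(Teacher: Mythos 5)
Your proposal is correct and follows essentially the same route as the paper: reduce to $[0,\pi/2]$ via the symmetry/periodicity of $f_n$ in the residue classes $n\equiv 0,3\pmod 4$, discard $|t|>n^{-(3/2-\varepsilon)}$ by Lemma~\ref{uniform_zero_lemma}, approximate $\log\cos(kt)$ by $-k^2t^2/2$ on the central window, and pass to the full Gaussian via Lemma~\ref{integral_transform}. The one (minor but genuine) improvement is your use of the even-order remainder $O(k^4t^4)$ for $\log\cos$, which controls the error uniformly for all $\varepsilon<1/4$, whereas the paper's cubic Lagrange remainder forces the extra restriction $\varepsilon<1/6$.
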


  \begin{proof}

    When $n \equiv$ 0 or 3 (mod 4), $f_n(t + m\pi) = f_n(t)$ for any integer $m$, so
    \begin{equation}\label{sn_piovertwo}
      S(n) = \frac{2\cdot2^{n-1}}{\pi}\int_{-\frac{\pi}{2}}^{\frac{\pi}{2}}f_n(t)\,dt,
     \end{equation}
    and we may assume $|t| \leq \pi/2$ when evaluating $f_n(t)$.

    By Lemma~\ref{uniform_zero_lemma}, $\int_{n^{-(3/2-\varepsilon)} < |t| < \pi/2} |f_n(t)|\,dt = o(n^{-3/2})$, 
    so it suffices to consider $|t| < n^{-(3/2 - \varepsilon)}$ when estimating $f_{n}(t)$ around $t=0$. 
    Recalling $$f_{n}(t) = \prod_{k=1}^{n}e^{\ln(\cos(kt))},$$ we first use  
    Taylor series to approximate $g_k(t) = \ln(\cos(kt))$ at $t=0$. 
    We have $g_k(t) = -k^{2}t^{2}/2 + R_2$, where $R_2$ is the 
    Lagrange remainder. Then $R_2$ is bounded by a constant times 
    $t^3g_k^{(3)}(t_0)$ for some $t_0$ near $0$. Since 
    $g_k^{(3)}(t) = -2k^3\sin(kt)/\cos^{3}(kt)$, and $t_0$ is small
    (since $|t| < n^{-(3/2 - \varepsilon)}$), we have that 
    $R_2 \leq ak^{3}t^{3}$ where $a$ is constant. The absolute error for
    $g_k(t)$ is thus bounded by $ak^{3}n^{-(9/2 - 3\varepsilon)}$. 

    Around $t=0$, $f_{n}(t)$ can be
    approximated as $\delta\prod_{k=1}^{n}e^{-\frac{k^{2}t^{2}}{2}}$ with error 
    $\delta \leq \prod_{k=1}^{n}e^{ak^{3}n^{-(9/2 - 3\varepsilon)}}$. This
    simplifies to
    \begin{equation}\label{finalapprox}
    f_n(t)\approx e^{-t^{2}/2\sum_{k=1}^{n}k^{2}} = e^{-t^{2}n(n+1)(2n+1)/12}.
    \end{equation} 
    Our error bound simultaneously simplifies to
    $$\delta \leq 
    e^{an^{-(9/2 - 3\varepsilon)}\sum_{k=1}^{n}k^{3}} = e^{an^{-(9/2 - 3\varepsilon)}n^{2}(n+1)^{2}/4}.$$ 
    This proves that the error goes to one as $n$ approaches infinity whenever
    $\varepsilon < \frac{1}{6}$.

    Substituting (\ref{finalapprox}) for $f_n(t)$ in equation~\ref{sn_piovertwo}, and applying 
    Lemma~\ref{uniform_zero_lemma}, we find that 
    
    $$\frac{\pi S(n)}{2^n} = (1+o(1))\int_{-n^{-(3/2-\varepsilon)}}^{n^{-(3/2-\varepsilon)}} e^{-n(n+1)(2n+1)t^{2}/12}\,dt + o(n^{-3/2}).
$$
  
    By Lemma~\ref{integral_transform}, this implies
    $$\frac{\pi S(n)}{2^n} = (1+o(1))\int_{-\infty}^{\infty} e^{-n(n+1)(2n+1)t^{2}/12}\,dt + o(n^{-3/2}).$$
    
    Using
    $$ \int_{-\infty}^{\infty} e^{-Ct^{2}}\,dt = \sqrt{\frac{\pi}{C}}$$ for any constant $C>0$ and 
    $n(n+1)(2n+1) \sim 2n^3$, we have $$S(n) \sim \frac{2^n}{\pi} \sqrt{\frac{12\pi}{2n^{3}}} =
    \sqrt{6/\pi} \cdot 2^n n^{-3/2},$$ as desired.

  \end{proof}

\section{Acknowledgements}

This research was partially completed while the author was an intern at Microsoft Research, Redmond, WA. We thank Henry Cohn for suggesting the problem and providing comments which greatly improved the manuscript. Special thanks are due to Xavier Gourdon, who pointed out a flaw in an earlier version of the paper, and suggested the arithmetic-geometric mean inequality approach now used in the third case of Lemma~\ref{uniform_zero_lemma}.




\bigskip
\hrule
\bigskip

\noindent (Concerned with sequence \seqnum{A025591}.)

\end{document}